\documentclass[11pt,a4paper,reqno]{amsart}

\usepackage{amsmath,amssymb,amsfonts,amsthm,mathtools}
\usepackage{enumerate}
\usepackage[colorlinks=true,citecolor=black,pagebackref=false]{hyperref}

\numberwithin{equation}{section}
\allowdisplaybreaks

\usepackage{ulem}
\newcommand{\R}{\mathbb R}
\newcommand{\N}{\mathbb N}
\newcommand{\E}{\mathbb E}
\newcommand{\supp}{\mathrm{supp}}

\newcommand{\set}[1]{\left\{#1\right\}}

\makeatletter
\@namedef{subjclassname@2020}{\textup{2020} Mathematics Subject Classification}
\makeatother
\newtheorem{thm}{Theorem}[section]
\newtheorem{prop}[thm]{Proposition}
\newtheorem{lem}[thm]{Lemma}

\newtheorem{rem}[thm]{Remark}
\newtheorem{ques}[thm]{Question}

\title[Almost everywhere convergence of Mock Fourier series]
{Almost everywhere convergence of mock Fourier series for the middle-fourth Cantor measure}

\author{Min-Wei Tang}

 \address[Min-Wei Tang]{School of Mathematics and Statistics, Hunan Normal University, Changsha, 410081, P.~R.~China}
\email{}

\author{Zhi-Yi Wu}
\address[Zhi-Yi Wu]{School of Mathematics and Information Science, Guangzhou University, Guangzhou, 510006, P.~R.~China}
\email{zhiyi\_wu2021@163.com}

\subjclass[2020]{42A20; 28A80; 42A38}

\keywords{Mock Fourier series, spectral measure,
Dirichlet kernel, the middle fourth Cantor measure}

\begin{document}
\begin{abstract}
In 1998, Jorgensen and Pedersen constructed the first example of a
singular continuous spectral measure. Precisely, they proved that the
self-similar measure generated by the iterated function system
$\{\frac{x-1}{4},\frac{x+1}{4}\}$ with equal weights, denoted by
$\mu_{1/4}$, is a spectral measure with a spectrum $\Lambda_4$, called the
canonical spectrum,\[
    \Lambda_4
      :=
      \set{
        \sum_{j=0}^{m-1}\varepsilon_j4^j:
        m\ge 1,\ \varepsilon_j\in\{0,1\}
      }.
\] For $f\in L^1(\mu_{1/4})$, let
$S_n f$ be the $n$-th partial sum of its Mock Fourier series with
respect to $\Lambda_4$.  We prove that the associated maximal operator
$S^{\ast}f=\sup_n|S_n f|$ is of weak type $(1,1)$.  Consequently,
$S_n f\to f$ $\mu_{1/4}$-almost everywhere on $\supp(\mu_{1/4})$. This solves a
long-standing open problem of Strichartz \cite[p.~341]{Str06}.
\end{abstract}

\maketitle

\section{Introduction}
For $\rho\in(0,1)$, the {\it Bernoulli convolution} $\mu_\rho$ is the probability measure on $\R$ that is the  distribution of the random variable \(\sum_{j=1}^\infty \xi_j \rho^j\), where \(\xi_i \) are i.i.d. random variables assuming values $1$ and $-1$ with equal probability. Equivalently, $\mu_\rho$ is the unique Borel probability measure satisfying
\[
\mu_\rho(\cdot)=\frac12\,\mu_\rho(\tau_1^{-1}(\cdot))+\frac12\,\mu_\rho(\tau_2^{-1}(\cdot)),
\]
where $\tau_1(x)=\rho(x-1)$ and $\tau_2(x)=\rho(x+1)$ (see \cite{Hut81}). This measure is supported on a compact $T_\rho$ as follows:
 \begin{equation}\label{eq:expand}
 T_\rho=\left\{\sum_{j=1}^\infty c_j\rho^j:~c_j\in\{-1,1\}~\text{for all}~j\right\}.
 \end{equation}

 The study of Bernoulli convolutions dates back to the 1930s and was
revived in the 1980s, revealing surprising connections with harmonic
analysis, number theory, dynamical systems, and fractal geometry;
see the survey \cite{PSS00}.  In the 21st century the subject has
witnessed a series of spectacular advances, including the breakthrough works of
Shmerkin~\cite{Shm14} and Varj\'{u}~\cite{Var19a,Var19b} on the absolute continuity and
dimension of Bernoulli convolutions.

In 1998, Jorgensen and Pedersen~\cite{JP98} discovered the first example of a non-atomic singular spectral measure, i.e., the Bernoulli convolution $\mu_\rho$ for $\rho=1/(2k)$, where $k\ge 2$ is an integer. Here, a Borel probability measure $\nu$ on $\R^d$ is called a {\it spectral measure} if there exists a countable set $\Lambda\subseteq\R^d$ such that $\{e^{2\pi i\langle\lambda,x\rangle}:\lambda\in\Lambda\}$ is an orthonormal basis for $L^2(\nu)$. Such a set $\Lambda$ is called a {\it spectrum} of $\nu$.

Since the celebrated  work of Jorgensen and Pedersen, the spectrality of Bernoulli convolutions has been studied extensively. When $1/\rho\in\N$, Dutkay and Jorgensen~\cite{DJ07} proved that $\mu_\rho$ is a spectral measure if and only if $\rho=1/(2k)$ for some integer $k\ge 1$. Hu and Lau~\cite{HL08} obtained a necessary condition: any spectral measure \(\mu_\rho\) must satisfy \(\rho=(p/q)^{1/r}\) for some positive integers \(p,q,r\) with \(p\) odd and \(q\) even. A full classification of the spectrality of Bernoulli convolutions for all $\rho\in (0,1)$ was ultimately obtained by Dai in \cite{Dai12}, showing that the only spectral Bernoulli convolution are those with $\rho=1/(2k)$ for $k\ge 1$..

\begin{thm}[{\cite{Dai12}}]\label{thm:dai}
Let $\rho\in (0,1)$. Then the Bernoulli convolution $\mu_\rho$ is a spectral measure if and only if $\rho=1/(2k)$ for some integer $k\ge 1$.
\end{thm}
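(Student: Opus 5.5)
The plan has two directions. Sufficiency (that $\rho=1/(2k)$ gives a spectral measure) follows Jorgensen--Pedersen; necessity, namely ruling out every other $\rho\in(0,1)$, is the real content of the statement.

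\textbf{Sufficiency.} Write $\mu=\mu_\rho$ with $\rho=1/(2k)$. Its Fourier transform factors as
\[
\widehat{\mu}(\xi)=\prod_{j=1}^\infty \cos(2\pi\rho^j\xi).
\]
For the candidate set $\Lambda=\{\sum_{j=0}^{m-1}\varepsilon_j (2k)^j:\ \varepsilon_j\in\{0,k\}\}$, the differences of distinct elements are of the form $(2k)^{j}\cdot k\cdot(\text{odd})$. Some factor of the product therefore equals $\cos(\pi\cdot\text{odd})$ or hits a zero of cosine, so $\Lambda$ is orthogonal.

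Completeness comes from verifying
\[
Q(\xi):=\sum_{\lambda\in\Lambda}|\widehat{\mu}(\xi+\lambda)|^2\equiv 1 .
\]
The self-similarity gives the identity
\[
Q(\xi)=\sum_{d\in\{0,k\}}\cos^2\bigl(2\pi\rho(\xi+d)\bigr)\,Q\bigl(\rho(\xi+d)\bigr),
\]
so $Q$ is invariant under the Ruelle transfer operator. This operator is attached to the Hadamard triple $(2k,\{-1,1\},\{0,k\})$. The remaining step is the standard argument that $Q$ is continuous, $Q\le1$ and $Q(0)=1$. Since there is no nontrivial cycle of the dual IFS on the zero set of the mask, a maximum-principle argument then gives $Q\equiv1$.

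\textbf{Necessity.} First, $\mu_\rho$ has no infinite orthogonal set when the zero set of $\widehat{\mu}$,
\[
Z=\bigcup_{j\ge1}\rho^{-j}\Bigl(\tfrac14+\tfrac12\mathbb Z\Bigr),
\]
cannot accommodate all differences $\Lambda-\Lambda$. Following Hu--Lau, if $\rho$ is transcendental, or algebraic but not of the form $(p/q)^{1/r}$ with $p$ odd and $q$ even, one shows that any three mutually orthogonal exponentials lead to a contradiction in a number field. Hence orthogonal sets are finite and $\mu_\rho$ is not spectral.

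That leaves $\rho=(p/q)^{1/r}$ with $p$ odd, $q$ even, and $\rho\ne 1/(2k)$. Passing to $\rho^r$ (writing $\mu_\rho$ as a convolution of $r$ scaled copies of $\mu_{\rho^r}$) reduces this to $r=1$, i.e.\ $\rho=p/q$ in lowest terms with $p>1$ odd, or $p=1$ and $q$ not of the form $2k$ ($q$ even forces $q=2k$, so effectively $p>1$).

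\textbf{Main obstacle.} The hard step is this rational case $\rho=p/q$ with $p\ge3$. Here infinite orthogonal sets do exist, and one must show that none of them is complete. The plan is to assume $\Lambda\ni0$ is a spectrum and normalize it so that $\Lambda\subset \tfrac{q}{2}\cdot(\text{suitable lattice})$. One then decomposes $\Lambda$ according to the first scale $j$ at which each difference lands in $Z$, giving a tree structure on $\Lambda$ analogous to the $(2k)$-adic tree.

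The spectral condition $Q\equiv1$, applied at carefully chosen points $\xi$, forces each level of the tree to be "full", since $|\widehat{\mu}|^2$ must redistribute mass exactly. Because $p\ge3$ does not divide the relevant powers of $q$, the required lattice points are pushed by a factor $p^n$ that grows, and summing the resulting estimates shows $Q(\xi)<1$ for some $\xi$. I would try first to estimate $\sum_{\lambda}|\widehat{\mu}(\xi+\lambda)|^2$ from above by $1-c$ using a uniform lower bound on $|\widehat{\mu}|$ away from $Z$ together with this $p$-adic mismatch. I expect this quantitative step to be the crux of Dai's argument.
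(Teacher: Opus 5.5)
The paper contains no proof of this statement. It is quoted from Dai~\cite{Dai12} as background, so your proposal must stand on its own, and it has genuine gaps.

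\textbf{Necessity.} The main gap is that necessity is never proved where the real difficulty lies, namely $\rho=p/q$ in lowest terms with $p\ge3$ odd and $q$ even. There you describe a hoped-for tree decomposition and then explicitly defer the quantitative step. The mechanism you name does not supply it either. A lower bound on $|\widehat\mu|$ off $Z$ makes individual terms of $\sum_\lambda|\widehat\mu(\xi+\lambda)|^2$ large. Turning that into $Q(\xi)\le 1-c$ requires knowing which frequencies $\Lambda$ is forced to miss, and that is precisely the missing argument.

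Your reduction ``pass to $\rho^r$'' is also invalid. Writing $\mu_\rho=\mu_{\rho^r}*\nu$ gives no implication from spectrality of $\mu_\rho$ to spectrality of $\mu_{\rho^r}$. Moreover, your case list silently drops every $\rho$ with $\rho^r=1/(2k)$ and $r\ge2$, e.g.\ $\rho=1/\sqrt{2}$ or $1/\sqrt{6}$. For these, $\mu_{\rho^r}$ \emph{is} spectral, so no rational-case argument can exclude them, and they need a direct argument. Take $r$ minimal; then $x^r-\rho^r$ is irreducible over $\mathbb{Q}$, so $1,\rho^{-1},\dots,\rho^{-(r-1)}$ are $\mathbb{Q}$-independent. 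A difference of nonzero elements lying in different classes $\rho^{-i}\mathbb{Q}$ then has two nonzero coordinates, so it is not in $Z$. Hence every orthogonal $\Lambda\ni0$ lies in one class $\rho^{-i}\mathbb{Q}$, and only the factors $\cos(2\pi\rho^j\xi)$ with $j\equiv i \pmod r$ create orthogonality. Split $\mu_\rho=\nu_1*\nu_2$ accordingly. Bessel's inequality for $\nu_1$, together with $|\widehat{\nu_2}(\eta)|<1$ for $\eta\ne0$, gives $\sum_\lambda|\widehat\mu(\xi+\lambda)|^2<1$ for every $\xi\notin-\Lambda$.

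\textbf{Sufficiency.} The steps you do write out also contain errors. With your normalization $\widehat\mu(\xi)=\prod_{j\ge1}\cos(2\pi\rho^j\xi)$, the digit set $\{0,k\}$ is wrong. For $k\ge2$ one gets $\widehat\mu(k)=-\prod_{j\ge1}\cos(\pi(2k)^{-j})\ne0$; the factor you single out is $\cos(\pi\cdot\text{odd})=-1$, not $0$. The triple $(2k,\{-1,1\},\{0,k\})$ is not Hadamard. Both weights in your transfer identity equal $\cos^2(2\pi\rho\xi)$, so they do not sum to $1$. The digits must be $\{0,k/2\}$, as in the paper's $\Lambda_{2k}$ (giving $\{0,1\}$ for $k=2$). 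The relevant cycles are those on which the mask $m(\xi)=\cos(2\pi\xi)$ has modulus $1$, not cycles in its zero set. Finally, $k=1$ must be treated separately: there $\{1/2\}$ is a nontrivial such cycle, the tree set $\tfrac12\{0,1,2,\dots\}$ is incomplete, and one uses $\tfrac12\mathbb{Z}$ instead.

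\textbf{The Hu--Lau step.} The claim that three mutually orthogonal exponentials are impossible holds for transcendental $\rho$ but fails for algebraic $\rho$. For $\rho=(\sqrt{5}-1)/2$, where $\rho^2+\rho=1$, the set $\{0,\tfrac14\rho^{-3},\tfrac14\rho^{-2}\}$ is orthogonal, because $\tfrac14\rho^{-3}-\tfrac14\rho^{-2}=\tfrac14\rho^{-1}$. For the remaining $\rho$, Hu and Lau show only that every orthogonal set is finite. Citing that theorem suffices, but the mechanism you sketch does not prove it.
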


For $k=1$, $\mu_{1/2}=\frac{1}{2}\mathcal{L}|_{[-1,1]}$, where $\mathcal{L}$ denotes the Lebesgue measure, and it is well-known that $\Lambda_2:=\frac{1}{2}\mathbb{Z}$ is the unique spectrum of $\mu_{1/2}$, up to translations. Jorgensen and Pedersen~\cite{JP98} proved that for $k\geq2$, the set
\[
\Lambda_{2k}= \Biggl\{ \sum_{j=0}^{m-1}c_j (2k)^j : m\ge 1,\ c_j\in\{0,k/2\}~\text{for}~ 2\leq j\leq m-1\Biggr\}
\]
is a spectrum of $\mu_{1/(2k)}$, referred to as the {\it canonical spectrum}.

Following Strichartz~\cite{Str00,Str06}, for each $f\in L^1(\mu_{1/(2k)})$, we define the Fourier coefficients
\[
\widehat f(\lambda)=\int f(x)\,e^{-2\pi i\lambda x}\,\mathrm d\mu_{1/(2k)}(x),\quad \lambda\in\Lambda_{2k}.
\]
The trigonometric series $\sum_{\lambda\in\Lambda_{2k}}\widehat f(\lambda)e^{2\pi i\lambda x}$ is called the {\it Mock Fourier series} of $f$ with respect to (w.r.t.) the spectrum $\Lambda_{2k}$.

For $k=1$, the spectrum $\Lambda_2$ reduces the
trigonometric series to the classical Fourier series on the interval $[-1,1]$.
In this case, for $1< p<\infty$, both $L^p$ convergence and almost everywhere convergence for $f\in L^p(\mu_{1/2})$ are classical theorems in \cite{Car66,Hunt68}.

For $k\geq2$ and $n\ge 1$, let
\[
\Lambda_{2k}^{(n)}=\Biggl\{\sum_{j=0}^{n-1} c_j(2k)^j : c_j\in\{0,k/2\}\Biggr\}.
\]
The $n$-th partial sum is given by
\[
S_n f(x)=\sum_{\lambda\in\Lambda_{2k}^{(n)}} \widehat f(\lambda)\, e^{2\pi i\lambda x}.
\]

Similar to the classical situation, for $k\geq2$, we have the following question:
\begin{ques}
In what sense does $S_n f$ converge to $f$ as $n\rightarrow+\infty$?
\end{ques}

Since $\mu_{1/(2k)}$ is a spectral measure, it follows immediately that $S_n f\to f$ in $L^2$ for all $f\in L^2(\mu_{1/(2k)})$. Later, Strichartz \cite{Str06} proved the stronger result that for every $1\le p<\infty$,
\[
S_n f\to f\quad\text{in }L^p\text{ for all }f\in L^p(\mu_{1/(2k)}).
\]
Moreover, Strichartz~\cite{Str00} proved that $S_n f$ converges uniformly to $f$ for every $f\in C(\supp(\mu_{1/(2k)}))$. On the other hand, Dutkay, Han and Sun~\cite{DHS14} showed that there exists a continuous function on $\R$ whose Mock Fourier series w.r.t. the spectrum $17\Lambda_4$ of $\mu_{1/4}$ diverges at $0$. The question of almost everywhere convergence is more delicate. Strichartz in \cite{Str06} proved that for $k\ge 3$, for every $f\in L^1(\mu_{1/(2k)})$, $S_n f\to f$ $\mu_{1/(2k)}$-a.e.~$x\in \supp(\mu_{1/(2k)})$. We remark that this $L^1$ endpoint result has
no analogue in the classical theory, where Kolmogorov's example
shows that Fourier series of an $L^1(\mathbb{T})$ function may diverge almost
everywhere \cite{K26}, and the celebrated Carleson-Hunt theorem requires
$f\in L^p(\mathbb{T})$ with $p>1$ \cite{Car66,Hunt68}. Strichartz's a.e.\ argument succeeded
only for $k\ge 3$, leaving the case $k=2$ open in the same paper \cite[p.~341]{Str06}.

In this paper we settle this problem. Define the maximal operator associated with the family $\{S_n\}$  for $\mu_{1/4}$ w.r.t. the canonical spectrum $\Lambda_4$ by
 \begin{equation}\label{eq:maxmaxope}
S^\ast f(x) = \sup_n |S_nf(x)|
\end{equation}

Our main result is as follows:
\begin{thm}\label{thm:main}
The maximal operator $S^\ast$ defined in \eqref{eq:maxmaxope} is of weak type $(1,1)$, i.e., there exists $C>0$, such that for all $\alpha>0$,
 \begin{equation*}
  \mu_{1/4}\bigl(\{x:|S^\ast f(x)|>\alpha\}\bigr)
  \le \frac{C}{\alpha}\,\|f\|_{L^1(\mu_{1/4})}.
\end{equation*}
Consequently, for every $f\in L^1(\mu_{1/4})$,
\[
\lim_{n\to\infty} S_n f(x) = f(x)
\]
for $\mu_{1/4}$-almost every $x\in\supp(\mu_{1/4})$.
\end{thm}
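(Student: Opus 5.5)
We write $\mu=\mu_{1/4}$ and $T=T_{1/4}$, and obtain $S_nf=D_n\ast f$ in the sense
\[
S_nf(x)=\int D_n(x-y)\,f(y)\,d\mu(y),
\qquad
D_n(t)=\sum_{\lambda\in\Lambda_4^{(n)}}e^{2\pi i\lambda t}=\prod_{j=0}^{n-1}\bigl(1+e^{2\pi i 4^j t}\bigr),
\]
so that
\[
|D_n(t)|=2^n\prod_{j=0}^{n-1}|\cos(\pi 4^j t)|.
\]
The plan is to bound $S^\ast f$ pointwise by a geometric-type maximal operator adapted to the cylinder structure of $T$, and to prove the weak $(1,1)$ estimate for that operator through a covering or martingale argument.

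\textbf{Geometry.} The cylinders of level $m$ are $I_w=\tau_w(T)$ for $w\in\{1,2\}^m$. Each has diameter comparable to $4^{-m}$ and $\mu$-mass $2^{-m}$. For $x,y\in T$ that first differ at digit $m+1$, the difference $x-y=\sum_j(c_j-c'_j)4^{-j}$ has its leading term $\pm 2\cdot 4^{-(m+1)}$, while the tail is at most $\tfrac{2}{3}4^{-(m+1)}$. Hence $|x-y|\asymp 4^{-m}$, which also shows that $\mu$ has the open set condition with well-separated pieces. The key computation is then $4^j(x-y)$ modulo $1$. For $j\le m$ the phases $4^j(x-y)$ involve only the digits beyond $m$, and the difference $x-y$ is a sum of terms $\pm 2\cdot 4^{-i}$ with $i\ge m+1$. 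We need a lower bound for $|\cos|$ at these points and a control of the product. For $j<m$ the quantity $4^j(x-y)$ is small, of size $4^{j-m}$, so $\cos\approx 1$ and the product over $j<m-O(1)$ is bounded. Consequently, for $n\le m$ we get $|D_n(x-y)|\lesssim 2^n$, while for $n>m$ the factor at $j=m$ or nearby is where cancellation occurs.

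\textbf{Kernel estimate.} For $x,y$ separated at level $m$, we aim for the bound
\[
|D_n(x-y)|\le C\,2^{\min(n,m)}\,\theta^{(n-m)_+}
\]
for some $\theta<1$, coming from the factors with $j\ge m$. This is the heart of the matter and the step I expect to be the main obstacle. Strichartz's argument for $k\ge3$ presumably had room in the digit set $\{0,k/2\}$, which keeps $\cos(\pi 4^jt)$ away from $0$ and $\pm1$ in a uniform way. For $k=2$ the phases $4^j(x-y)$ can accumulate near integers: when the tail digits of $x$ and $y$ agree, $4^j(x-y)$ mod $1$ lies near $0$, and the factors approach $1$ in absolute value. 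So no uniform $\theta$ is available, and the factor $2$ is not beaten. I would first try to handle this by averaging rather than by a pointwise bound. Write the level-$(m+1)$ digit pattern of $y$ relative to $x$, and observe that the product over $j\ge m$ is essentially a function of the shift of the tails. Then, integrated over $y\in I_w$ against $\mu$, it gives
\[
\int_{I_w}|D_n(x-y)|\,d\mu(y)\le C\,2^{m}\,\mu(I_w)\cdot(\text{something summable}).
\]
This is a Riesz-product computation using the self-similarity $\widehat\mu(\xi)=\prod_j\cos(2\pi 4^{-j}\xi)$, and it is where the explicit value $k=2$ enters.

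\textbf{Maximal bound and conclusion.} With the averaged kernel estimate in hand, decompose $y$ according to the level $m$ at which it separates from $x$. This gives
\[
|S_nf(x)|\le \sum_m 2^{\min(n,m)}\,\theta_{n,m}\int_{I_m(x)\setminus I_{m+1}(x)}|f|\,d\mu .
\]
Here $I_m(x)$ is the level-$m$ cylinder containing $x$, and $2^m\int_{I_m(x)}|f|\,d\mu$ is the dyadic martingale average. To handle the non-uniform decay in the worst configurations, I would use a Calder\'on--Zygmund decomposition on cylinders, adapting Strichartz's scheme. Let $f=g+\sum_Q b_Q$ with respect to the cylinder filtration at height $\alpha$. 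The good part $g$ is handled by the $L^2$ bound, which follows from the uniform bound above together with Doob. For each bad atom $b_Q$, which has mean zero on $Q$, we use the fact that $D_n$ restricted to $Q$ is constant up to error for $n\le \mathrm{level}(Q)$, so $S_nb_Q=0$ on the small scale. For $n$ larger, the averaged kernel bound outside a fixed enlargement of $Q$ gives the $L^1$ control. Combining the two parts yields the weak $(1,1)$ bound for $S^\ast$. Finally, almost everywhere convergence follows by the standard density argument: $S_nf\to f$ uniformly for continuous $f$ (Strichartz), continuous functions are dense in $L^1(\mu)$, and the weak $(1,1)$ bound controls the exceptional set.
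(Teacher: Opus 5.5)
Your proposal locates the difficulty correctly, but it does not resolve it, and the route you sketch around it breaks down.

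\textbf{Where the proposed route fails.} The pointwise bound $|D_n(x-y)|\le C2^{\min(n,m)}\theta^{(n-m)_+}$ is false for $k=2$, as you suspect. Take $x=\tfrac13$ (all digits $+1$) and let $y$ be obtained by flipping every digit after the $m$-th. Then $4^j(x-y)\equiv\tfrac23\pmod 1$ for all $j\ge m$, so every factor with $j\ge m$ has modulus exactly $1$, and $|D_n(x-y)|\ge c\,2^m$ for every $n$.

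Replacing the pointwise bound by an average $\int_{I_w}|D_n(x-y)|\,d\mu(y)$, left as ``something summable'', does not help. An average of the kernel cannot be paired with an arbitrary $|f|\in L^1$, which may concentrate exactly where the kernel is large. Your displayed inequality $|S_nf(x)|\le\sum_m2^{\min(n,m)}\theta_{n,m}\int_{I_m(x)\setminus I_{m+1}(x)}|f|\,d\mu$ therefore forces $\theta_{n,m}\ge 2^{-m}\sup_{y\in I_m(x)\setminus I_{m+1}(x)}|D_n(x-y)|\ge c$ for $m<n$. With that, the majorant is at least $\tfrac c2\,n$ already for $f\equiv1$, so it yields no bound on $S^\ast$. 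This annular decomposition is precisely Strichartz's scheme, which needs $\pi/(2k-1)<1$.

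The Calder\'on--Zygmund step has problems of its own:
\begin{itemize}
\item $S_nb_Q\neq0$ for $n\le\ell(Q)$, because the characters $e^{2\pi i\lambda y}$, $\lambda\in\Lambda_4^{(n)}$, are not constant on level-$n$ cylinders. So $S_n\neq E_n$, and the ``error'' is never quantified.
\item The $L^2$ maximal bound for the good part is derived from ``the uniform bound above'', which you have just said does not exist. The argument is circular.
\item Bounding $\int_{Q^{*c}}\sup_n|S_nb_Q|\,d\mu$ needs a kernel estimate with the supremum over $n$ inside the integral, not a fixed-$n$ average.
\end{itemize}

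\textbf{The missing idea.} Refine the partition from annuli to the $2^n$ individual level-$n$ cylinders $I_{n,S'}(x)$, indexed by the exact mismatch set $S\subseteq\{0,\dots,n-1\}$. On each of them the kernel is pointwise controlled: $|D_n(x-y)|\le 2^{n-\#S}(4\pi/3)^{\#S}4^{\min S-n}$. The gain comes from each mismatched factor satisfying $|F_{g_k}|\le2\pi|R_{g_k}|\le\frac{4\pi}{3}\,4^{g_k-g_{k+1}}$. So agreeing tails, which you identify as the obstruction, actually help: they make the preceding mismatched factor tiny. The real obstruction is long runs of mismatches, as in the example above, and these cylinders are rare. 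Summing $2^{-n}$ times these constants over all $S$ with $\min S=n-r$ gives exactly $\frac{b}{1+b}p^r$ with $b=\frac{2\pi}{3}$ and $p=\frac{1}{4}+\frac{\pi}{6}<1$. Hence $|S_nf|\le E_n|f|+\frac{b}{1+b}\sum_{r\le n}p^rQ_{n,r}f$, where $Q_{n,r}$ is a convex combination of digit-flipped conditional expectations $A_{n,S'}|f|$ with flips only in the last $r$ digits.

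These positive operators, unlike $S_n$, have the exact cancellation your Calder\'on--Zygmund step wants: $A_{n,S'}b_I=0$ if $n\le\ell(I)$, and $A_{n,S'}b_I$ is supported in $I$ if $n\ge\ell(I)+r$. This gives $\mu(\sup_{n\ge r}Q_{n,r}f>\alpha)\le9r\|f\|_1/\alpha$ with no $L^2$ input. Since $\|\cdot\|_{L^{1,\infty}}$ is only a quasi-norm, the series in $r$ is then summed via a level-set splitting lemma, using $\sum_r\sqrt{r\,p^r}<\infty$. Your closing density argument coincides with the paper's.
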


The key novelty of our proof is to avoid the Hardy--Littlewood enlargement of the Dirichlet kernel used in the argument of Strichartz \cite{Str06}. Instead, we keep the average on its original cylinder and define positive terminal--shift operators $Q_{n,r}$ with coefficients $p=\frac{1}{4}+\frac{\pi}{6}<1$. A generalised aggregation lemma then sums the infinitely many weak-type estimates, yielding a weak-type $(1,1)$ bound for the maximal operator and proving almost everywhere convergence for \( k = 2 \). This removes the condition $\pi/(2k-1)<1$ for $k\ge 3$, which fails for $k=2$.

\textbf{Notation.} For simplicity, throughout the paper, we write $\mu=\mu_{1/4}$.

\section{Dirichlet kernel estimate and cylinder integration}

\subsection{Digit expansions and conditional expectations}

By \eqref{eq:expand}, each \(x\in\supp(\mu)\) has a unique expansion
\begin{equation}\label{eq:expansion}
 x = \sum_{j=1}^{\infty} \omega_j 4^{-j},
\end{equation}
where \(\omega_j\in\{-1,1\}\) for all \(j\ge 1\).  Consider the symbolic space $\Omega=\{-1,1\}^{\mathbb N}$ equipped with the Bernoulli measure $\mathbb P=(\frac12,\frac12)^{\mathbb N}$ and define \(\Phi:\Omega\to\supp(\mu)\) by
\[
\Phi(\omega)=\sum_{j=1}^{\infty}\omega_j 4^{-j},
\qquad \omega=\omega_1\omega_2\dots\in\Omega.
\]
Then \(\mu=\mathbb P\circ\Phi^{-1}\), and hence for every \(f\in L^1(\mu)\),
\[
\int_{\supp(\mu)} f(x)\,\mathrm{d}\mu(x)
=
\int_\Omega f(\Phi(\omega))\,\mathrm{d}\mathbb P(\omega).
\]

For $\eta=\eta_1\eta_2\cdots\in \{-1,1\}^\infty$ and $n\geq 1$, let \[
C_{n}(\eta)
=
\{\omega=\omega_1\omega_2\cdots\in\Omega:\omega_j=\eta_j,\ j=1,\dots,n\},
\]
be the level-\(n\) cylinder containing \(\eta\), which has \(\mathbb P\)-measure \(2^{-n}\).
We define \[\mathcal{F}_n=\sigma(\{\Phi(C_{n}(\eta)): \eta\in\{-1,1\}^\infty\})\] and $\mathcal{F}_\infty=\sigma(\bigcup_{n=1}^\infty\mathcal{F}_n)$. Then, one has $\mathcal{F}_1\subseteq \mathcal{F}_2\subseteq \cdots\subseteq \mathcal{F}_\infty$ and $\mathcal{F}_\infty$ is the set consisting of all $\mu$-measurable set.

For \(x=\Phi(\omega)\) with some \(\omega\in\Omega\), set
\[
I_n(x)=\Phi(C_n(\omega))\subseteq\supp(\mu),
\]
then, for every \(f\in L^1(\mu)\),
\[
 E_n(f)(x)
:= E[f\mid \mathcal{F}_n](x)=2^n\int_{I_n(x)} f(y)\,\mathrm{d}\mu(y)
.
\]
Hence,
\(
\{E_n(f),\mathcal F_n\}_{n\ge1}
\)
is a martingale on \((\mathcal F_\infty,\mu)\) since $\E[\E_{n+1}(f)\mid \mathcal F_n]=\E_{n}(f)$.  In particular, taking $f$ non-negative yields a non-negative martingale, to which Doob's maximal inequality will be applied in Section~6.

For \(\omega\in\Omega\) and a nonempty set \(S\subseteq\{1,\dots,n\}\), define \(\sigma_{n,S}\omega\in\Omega\) by
\[
(\sigma_{n,S}\omega)_j=
\begin{cases}
-\omega_j, & j\in S,\\
\phantom{-}\omega_j, & j\notin S.
\end{cases}
\]
The map \(\sigma_{n,S}:\Omega\to\Omega\) clearly preserves \(\mathbb P\), i.e., for every measurable set $A\subseteq\Omega$, $\mathbb P(\sigma_{n,S}^{-1}(A))=\mathbb P(A)$.
If \(x=\Phi(\omega)\in\supp(\mu)\), we denote
$I_{n,S}(x)=\Phi(C_n(\sigma_{n,S}\omega))$, then for $f\in L^1(\mu)$,
\begin{align}\label{eq:Aadd}
E_n(|f|)(\Phi(\sigma_{n,S}(\omega)))=2^n\int_{I_{n,S}(x)} |f(y)|\,\mathrm{d}\mu(y)
\end{align}

\subsection{The Dirichlet kernel and mismatch estimates}

Recall that the canonical spectrum of $\mu$ is given by
\[
\Lambda_{4}= \Biggl\{ \sum_{j=0}^{m-1}c_j 4^j : m\ge 1,\ c_j\in\{0,1\}\Biggr\}.
\]
For $n\ge 1$, let
\[
\Lambda_n:=\Lambda_{4}^{(n)}=\Biggl\{\sum_{j=0}^{n-1} c_j4^j : c_j\in\{0,1\}\Biggr\},
\]
and so $\#(\Lambda_n)=2^n$.  Here and in the sequel,  \(\#(A)\) denotes the cardinality of a set \(A\).

The {\it Dirichlet kernel} is defined by
$$D_n(x)=\sum_{\lambda\in\Lambda_{n}}e^{2\pi i\lambda x},$$
which is easily factorised as
\begin{equation}\label{eq:D}
D_n(x)=\prod_{j=0}^{n-1}\bigl(1+e^{2\pi i4^j x}\bigr)=:\prod_{j=0}^{n-1}F_j(x).
\end{equation}
This immediately yields that, for every $f\in L^1(\mu)$,
$$
S_n f(x)=\int_{\mathbb R} D_n(x-y)\,f(y)\,\mathrm d\mu(y).
$$

For any $x,y\in\supp(\mu)$. Write
$$
x = \sum_{k=1}^{\infty}\omega_k(x)\,4^{-k}
\quad\text{and}\quad
y = \sum_{k=1}^{\infty}\omega_k(y)\,4^{-k},
$$
with all $\omega_k(x),\omega_k(y)\in\{-1,1\}$. For each $j\geq0$,
\begin{equation}\label{eq:frac}
4^{\,j}(x-y)\equiv
\frac{\omega_{j+1}(x)-\omega_{j+1}(y)}{4}
+\sum_{l=2}^{\infty}\bigl(\omega_{j+l}(x)-\omega_{j+l}(y)\bigr)\,4^{-l}
\pmod 1.
\end{equation}
Define
$$
R_j(x,y)=\sum_{l=2}^{\infty}\bigl(\omega_{j+l}(x)-\omega_{j+l}(y)\bigr)\,4^{-l}.
$$
Since $|\omega_k(x)-\omega_k(y)|\le 2$ for all $k$, it follows that
\begin{equation}\label{eq:crude-R}
|R_j(x,y)|\le 2\sum_{l=2}^{\infty}4^{-l}=\frac{1}{6}.
\end{equation}

Now, for $n\geq1$, we define the following set, called the {\it mismatch set},
$$
M_n(x,y)=\{0\le j\le n-1:\omega_{j+1}(x)\neq\omega_{j+1}(y)\}.
$$
If $M_n(x,y)=\emptyset$, we only need the following trivial estimate:
\[|D_n(x-y)|\leq 2^n.\]
If $M_n(x,y)\neq\varnothing$, write $M_n(x,y)=\{g_j:1\leq j\leq m\}$ such that $g_1<g_2<\dots<g_m$.
In this case, a more delicate estimate is required. We estimate \(F_j(x-y)\) for \(0\le j\le n-1\) by distinguishing two cases.

Case A: $j\notin M_n(x,y)$. By the simple fact that $|1+e^{2\pi i\theta}|=2|\cos(\pi\theta)|$ for all $\theta\in\mathbb{R}$, we have
\begin{equation}\label{eq:matched}
|F_j(x-y)| = 2|\cos(\pi R_j)| \le 2
\end{equation}

Case B: $j=g_k\in M_n(x,y)$ for some $1\leq k\leq m$. Then
$$
\frac{\omega_{g_k+1}(x)-\omega_{g_k+1}(y)}{4}=\pm\frac12.
$$
By \eqref{eq:frac}, we have $4^{\,g_k}(x-y)\equiv \pm\frac12+R_{g_k}(x,y)\pmod 1$ and thus
\begin{equation}\label{eq:mismatched-sin}
|F_{g_k}(x-y)| = 2\bigl|\cos\bigl(\tfrac\pi2+\pi R_{g_k}(x,y)\bigr)\bigr|
=2|\sin(\pi R_{g_k}(x,y))|
\le 2\pi|R_{g_k}(x,y)|.
\end{equation}

It remains to estimate the bound on $R_{g_k}(x,y)$ for $1\leq k\le m$.  For $1\leq k\leq m-1$, by the definition of $M_n(x,y)$, we obtain
\begin{align*}
|R_{g_k}(x,y)|&\leq\sum_{i=2}^{\infty}|(\omega_{g_k+i}(x)-\omega_{g_k+i}(y))|4^{-i}\\
&=\sum_{l=g_{k+1}-g_k+1}^{\infty}
|\bigl(\omega_{g_k+l}(x)-\omega_{g_k+l}(y)\bigr)|\,4^{-l}\\
&\leq 2\cdot\frac{4^{-(g_{k+1}-g_k+1)}}{1-4^{-1}}\\
&= \frac{2}{3}\,4^{-(g_{k+1}-g_k)}.
\end{align*}

For $k=m$.  Since $g_m$ is the maximum element in $M_n(x,y)$,
one has
$$
\omega_{g_m+i}(x)=\omega_{g_m+i}(y) \qquad\text{for } 2\le i\le n-g_m.
$$
When $n-g_m>1$, the terms with $i=2,\dots,n-g_m$ vanish from $R_{g_m}(x,y)$, thus
\begin{align*}
|R_{g_m}(x,y)|&\leq\sum_{i=n-g_m+1}^{\infty}
|\bigl(\omega_{g_m+i}(x)-\omega_{g_m+i}(y)\bigr)|\,4^{-i}\\
&\le 2\sum_{i=n-g_m+1}^{\infty}4^{-i}
= \frac{2}{3}\,4^{g_m-n}.
\end{align*}
When $g_m=n-1$, we also have,
\begin{align*}
|R_{g_m}(x,y)|
&\leq\sum_{i=2}^{\infty}
|\bigl(\omega_{n-1+i}(x)-\omega_{n-1+i}(y)\bigr)|\,4^{-i}\\
&\leq 2\sum_{i=2}^{\infty}4^{-i}= \frac{2}{3}\,4^{g_m-n}.
\end{align*}
Denote $g_{m+1}=n$. Then $|R_{g_m}(x,y)|\le \frac{2}{3}\,4^{g_m-g_{m+1}}$.

Combining \eqref{eq:matched}
with the uniform bound $\frac{4\pi}{3}\,4^{\,g_k-g_{k+1}}$ of $|F_{g_k}(x-y)|$ for all $k=1,\dots,m$, we obtain
\begin{align*}
|D_n(x-y)|
&= \prod_{j=0}^{n-1} |F_j(x-y)| \notag\\
&\le 2^{\,n-m}\;
   \prod_{k=1}^{m}\Bigl(\frac{4\pi}{3}\;4^{\,g_k-g_{k+1}}\Bigr) \notag\\
&= 2^{\,n-m}\Bigl(\frac{4\pi}{3}\Bigr)^{\!m}\;
   4^{(g_1-n)}. 
\end{align*}
Thus we have proved the following lemma.

\begin{lem}\label{lem:mismatch}
Let $x,y\in\supp(\mu)$. For $n\geq1$,  denote $g=\min M_n(x,y)$ and $m=\#(M_n(x,y))$.   Then if $M_n(x,y)\neq\varnothing$
\begin{equation}\label{eq:strichartz-bound}
|D_n(x-y)|
\le 2^{\,n-m}\Bigl(\frac{4\pi}{3}\Bigr)^{\!m}4^{\,g-n}.
\end{equation}
If $M_n(x,y)=\varnothing$, $|D_n(x-y)|\le 2^n$.
\end{lem}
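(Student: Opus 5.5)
The plan is to argue directly from the product factorisation \eqref{eq:D}, $|D_n(x-y)|=\prod_{j=0}^{n-1}|F_j(x-y)|$, and bound each factor using the digit expansions of $x$ and $y$. When $M_n(x,y)=\varnothing$, every factor has the form $1+e^{2\pi i\theta}$, so each has modulus at most $2$ and $|D_n(x-y)|\le 2^n$. This is the trivial case.

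For $M_n(x,y)\neq\varnothing$, write $M_n(x,y)=\{g_1<\dots<g_m\}$, so $g=g_1$, and set $g_{m+1}=n$. The $n-m$ matched indices are handled by Case A \eqref{eq:matched}, which gives each of them a factor at most $2$. For a mismatched index $g_k$, reduction \eqref{eq:frac} shows $4^{g_k}(x-y)\equiv\pm\frac12+R_{g_k}\pmod 1$. Then \eqref{eq:mismatched-sin} gives $|F_{g_k}|=2|\sin(\pi R_{g_k})|\le 2\pi|R_{g_k}|$.

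The key step is the tail bound $|R_{g_k}(x,y)|\le\frac23 4^{g_k-g_{k+1}}$ for every $1\le k\le m$.
\begin{itemize}
\item For $k<m$, the digits at positions $g_k+2,\dots,g_{k+1}$ agree by definition of the mismatch set. Hence the series for $R_{g_k}$ starts at $l=g_{k+1}-g_k+1$, and bounding each surviving digit difference by $2$ gives a geometric tail of $\frac23 4^{-(g_{k+1}-g_k)}$.
\item For $k=m$, the digits at positions $g_m+2,\dots,n$ agree, so the same computation gives $\frac23 4^{g_m-n}$. The subcase $g_m=n-1$ is covered by the crude bound $\frac{2}{3}\cdot\frac14=\frac16$ from \eqref{eq:crude-R}.
\end{itemize}
This step needs the most care. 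The indexing shift between $j$ and the digit $\omega_{j+1}$ makes off-by-one errors easy, and the boundary case $k=m$ must be handled with the convention $g_{m+1}=n$.

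Multiplying, each mismatched factor is at most $\frac{4\pi}{3}4^{g_k-g_{k+1}}$. The exponents telescope, $\sum_{k=1}^m(g_k-g_{k+1})=g_1-n$, and together with the $2^{n-m}$ from the matched factors this yields \eqref{eq:strichartz-bound}.
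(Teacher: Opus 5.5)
Your proposal is correct and follows essentially the same route as the paper. You factor $D_n(x-y)$ into the $F_j$, bound matched factors by $2$ and mismatched ones by $2\pi|R_{g_k}|$, prove $|R_{g_k}|\le\frac23 4^{g_k-g_{k+1}}$ with $g_{m+1}=n$ (the case $g_m=n-1$ falling under the crude bound $\frac16$), and telescope the exponents to $4^{g-n}$; your index bookkeeping, that the series for $R_{g_k}$ starts at $l=g_{k+1}-g_k+1$, matches the paper's computation exactly.
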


\begin{rem}
The idea of the above estimate for $\mu$ is from \cite{Str06} and the result matches the form of
\cite[eq.\,(2.11)]{Str06} after a change of digits.
\end{rem}

\subsection{Cylinder integration and control constants}

For a non-empty subset $S\subseteq\{0,\dots,n-1\}$, write $S'=\{j+1:j\in S\}(\subseteq\{1,\dots,n\})$. According to the definition in Subsection 2.1, for $x\in\supp(\mu)$, $I_{n,S'}(x)$ is the level-$n$ cylinder consisting of points whose first $n$ digit positions differ from those of $x$ exactly on $S'$. In particular, for any $y\in I_{n,S'}(x)$, the set $M_n(x,y)$ equals $S$. Integrating~\eqref{eq:strichartz-bound} over this sub-cylinder yields
\begin{align}
\int_{I_{n,S'}(x)}|D_n(x-y)|\,|f(y)|\,\mathrm d\mu(y)
&\le 2^{\,n-m}\Bigl(\frac{4\pi}{3}\Bigr)^{\!m}4^{\,g-n}
      \int_{I_{n,S'}(x)}|f(y)|\,\mathrm d\mu(y) \notag\\
&= \Bigl(\frac{2\pi}{3}\Bigr)^{\!m}4^{\,g-n}\,A_{n,S'}|f|(x),
\label{eq:cylinder-bound}
\end{align}
where $g,m$ are given in Lemma \ref{lem:mismatch} and $A_{n,S'}h(x)$ is defined as follows:
\[
A_{n,S'}h(x)=2^n\int_{I_{n,S}(x)} h(y)\,\mathrm{d}\mu(y).
\]

For $S=\varnothing$ we have
\begin{equation}\label{eq:diag}
\int_{I_{n,\varnothing}(x)}|D_n(x-y)|\,|f(y)|\,\mathrm d\mu(y)
\le E_n(|f|)(x).
\end{equation}

\section{Domination by terminal mismatch integral averages}
We reorganise the sum over level-$n$ sub-cylinders by
$r=n-g$, where $g=\min S$ as in Section~2.3.  For $1\le r\le n$ define
\begin{equation}\label{eq:Qnr}
Q_{n,r}f(x)=
\frac{1}{b(1+b)^{\,r-1}}
\sum_{\substack{S\subseteq\{n-r,\dots,n-1\}\\ \min S=n-r}}
b^{\#(S)}\,A_{n,S'}|f|(x),
\end{equation}
where $b=\frac{2\pi}{3}$ and $S'=\{j+1:j\in S\}$.  Then $Q_{n,r}$ is a convex combination of the operators $A_{n,S'}$, since
\begin{equation*}
\sum_{\substack{S\subseteq\{n-r,\dots,n-1\}\\ \min S=n-r}} b^{\#(S)}
= b\sum_{j=0}^{r-1}\binom{r-1}{j}b^{\,j}
=b(1+b)^{\,r-1}.
\end{equation*}

\begin{prop}\label{prop:domination}
Let $p=\frac{1}{4}+\frac{\pi}{6}$. For every $n\ge 1$ and $f\in L^1(\mu)$,
\[
|S_n f(x)|
\le E_{n}(|f|)(x) + \frac{b}{1+b}\sum_{r=1}^{n} p^{\,r}\,Q_{n,r}f(x).
\]
Consequently,
\[
\sup_{n\ge 1}|S_n f(x)|
\le  \sup_{n\ge 1} E_{n}(|f|)(x) + \frac{b}{1+b}\sum_{r=1}^{\infty} p^{\,r} M_r f(x),
\]
where
$
M_r f(x) = \sup_{n\geq r} Q_{n,r}f(x).
$
\end{prop}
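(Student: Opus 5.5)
The plan is to decompose the integral defining $S_nf(x)$ over the $2^n$ level-$n$ cylinders, apply the cylinder bounds \eqref{eq:cylinder-bound} and \eqref{eq:diag} on each piece, and then regroup the pieces by $r=n-\min S$. The constant $p$ should come out of the bookkeeping automatically.

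Fix $n\ge1$ and $x=\Phi(\omega)\in\supp(\mu)$. As $S$ ranges over all subsets of $\{0,\dots,n-1\}$, the sets $\Phi(C_n(\sigma_{n,S'}\omega))=I_{n,S'}(x)$ (with $I_{n,\varnothing}(x)=I_n(x)$) are exactly the $2^n$ level-$n$ cylinders. These partition $\supp(\mu)$ up to $\mu$-null sets, since $\mu=\mathbb P\circ\Phi^{-1}$ and the expansion \eqref{eq:expansion} is unique. Starting from $S_nf(x)=\int D_n(x-y)f(y)\,\mathrm d\mu(y)$, we get
\[
|S_nf(x)|\le\sum_{S\subseteq\{0,\dots,n-1\}}\int_{I_{n,S'}(x)}|D_n(x-y)|\,|f(y)|\,\mathrm d\mu(y).
\]
The term $S=\varnothing$ is bounded by $E_n(|f|)(x)$ by \eqref{eq:diag}. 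For $S\neq\varnothing$ with $g=\min S$ and $m=\#S$, \eqref{eq:cylinder-bound}, which rests on Lemma~\ref{lem:mismatch} because $M_n(x,y)=S$ on that cylinder, gives the bound $b^{m}4^{g-n}A_{n,S'}|f|(x)$ with $b=2\pi/3$. Here $A_{n,S'}$ should be read as the average over $I_{n,S'}(x)$; the subscript $I_{n,S}$ in its display looks like a typo.

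Next we group the nonempty $S$ by $r=n-g\in\{1,\dots,n\}$. The condition $\min S=n-r$ forces $S\subseteq\{n-r,\dots,n-1\}$, so by definition \eqref{eq:Qnr},
\[
\sum_{\min S=n-r} b^{\#S}4^{-r}A_{n,S'}|f|(x)=4^{-r}\,b(1+b)^{r-1}\,Q_{n,r}f(x)=\frac{b}{1+b}\Bigl(\frac{1+b}{4}\Bigr)^{r}Q_{n,r}f(x).
\]
Since $(1+b)/4=\frac14+\frac{\pi}{6}=p$, summing over $r$ gives the first inequality.

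For the second statement, every term is nonnegative and $Q_{n,r}f\le M_rf$ whenever $r\le n$. We can therefore bound the sum $\sum_{r=1}^n$ by the full series $\sum_{r=1}^\infty p^rM_rf(x)$, which does not depend on $n$. Bounding $E_n(|f|)(x)$ by its supremum over $n$ and then taking the supremum over $n$ on the left yields the second inequality.

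The main point needing care is the partition step: the map $S\mapsto I_{n,S'}(x)$ must be a bijection onto the level-$n$ cylinders, and on each such cylinder the mismatch set $M_n(x,y)$ must equal $S$ exactly. Both follow directly from the definition of $\sigma_{n,S'}$. Beyond that, the argument is the bookkeeping identity $\sum_{\min S=n-r}b^{\#S}=b(1+b)^{r-1}$, already computed in the text.
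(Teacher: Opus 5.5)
Your proposal is correct and follows the paper's proof essentially step for step. You split $\int|D_n(x-y)|\,|f(y)|\,\mathrm d\mu(y)$ over the $2^n$ level-$n$ cylinders, bound the diagonal piece by \eqref{eq:diag} and the others by \eqref{eq:cylinder-bound}, regroup by $r=n-\min S$ using $4^{-r}b(1+b)^{r-1}=\frac{b}{1+b}p^r$, and pass to suprema using nonnegativity and $Q_{n,r}f\le M_rf$ for $r\le n$. You are also right that the display defining $A_{n,S'}$ should integrate over $I_{n,S'}(x)$ rather than $I_{n,S}(x)$, which matches how Section~4 defines it.
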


\begin{proof}
Partition $\supp(\mu)$ into the $2^{\,n}$ cylinders of level~$n$. \(I_n(x)\) is the cylinder whose first $n$ digits agree with those of $x$, and other cylinders can be written as $I_{n,S'}(x)$ for non-empty $S\subseteq\{0,\dots,n-1\}$. According to \eqref{eq:cylinder-bound} and \eqref{eq:diag}, we have
\begin{align}\label{eq:leqleaeaq}
|S_n f(x)|
&\le \int_{\supp(\mu)}|D_n(x-y)|\,|f(y)|\,\mathrm d\mu(y) \nonumber \\
&= \int_{I_n(x)}|D_n(x-y)|\,|f(y)|\,\mathrm d\mu(y) \nonumber \\
&\quad + \sum_{r=1}^{n}
   \sum_{\substack{S\subseteq\{n-r,\dots,n-1\}\\ \min S=n-r}}
   \int_{I_{n,S'}(x)}|D_n(x-y)|\,|f(y)|\,\mathrm d\mu(y) \nonumber \\
&\le E_n(|f|)(x) + \sum_{r=1}^{n}
   \sum_{\substack{S\subseteq\{n-r,\dots,n-1\}\\ \min S=n-r}}
   4^{-r}b^{\#(S)}A_{n,S'}|f|(x) \nonumber \\
&= E_n(|f|)(x) + \frac{b}{1+b}\sum_{r=1}^{n} p^{\,r}\,Q_{n,r}f(x),
\end{align}
where the last equality holds by the simple fact that $4^{-r}b(1+b)^{\,r-1}=\frac{b}{1+b}\,p^{\,r}$. This proves the first assertion.

Taking the supremum over $n$ of both sides of \eqref{eq:leqleaeaq} and using that all summands
are non-negative yields the second claim. The proof is complete.
\end{proof}

\section{The terminal-digit shifted maximal lemma}

Given a non-empty subset $S\subseteq\{0,\dots,n-1\}$ and $S'=\{j+1:j\in S\}(\subseteq\{1,\dots,n\})$. For an integrable function $u$ on $\supp(\mu)$, recall that
$$
  A_{n,S'}u(x)=2^n\int_{I_{n,S'}(x)} u(y)\,\mathrm{d}\mu(y),
$$
where $I_{n,S'}(x)$ denotes
$\Phi(C_n(\sigma_{n,S'}\omega))$ for $x=\Phi(\omega)$, as introduced in Section 2.
Then set
\begin{equation}\label{eq:linear-Qnr}
  \widetilde Q_{n,r}u(x)=\frac{1}{b(1+b)^{\,r-1}}
  \sum_{\substack{S\subseteq\{n-r,\ldots,n-1\}\\
                   \min S=n-r}}
  b^{\#S}\,A_{n,S'}u(x),
  \qquad n\ge r.
\end{equation}
Therefore, $Q_{n,r}u(x)=\widetilde Q_{n,r}|u|(x)$.

The coefficients in~\eqref{eq:linear-Qnr} are non-negative and sum to
one, and each digit-flip map $\sigma_{n,S'}$ preserves $\mu$.
We now verify the $L^1$ and $L^\infty$ contraction properties
of $\widetilde Q_{n,r}$.  For any $u\in L^1(\mu)$, the
measure-preserving property of $\sigma_{n,S'}$ and Fubini's theorem give
\begin{align*}
  \|\widetilde A_{n,S'}u\|_{L^1(\mu)}
  &\leq \int_{\supp(\mu)} 2^n\int_{\supp(\mu)} |u(y)|\mathbf 1_{I_{n,S'}(x)}(y)\,\mathrm{d}\mu(y)\,\mathrm d\mu(x)\\
  &=2^n \int_{\operatorname{supp}(\mu)} |u(y)|
\left( \int_{\operatorname{supp}(\mu)} \mathbf 1_{I_{n,S'}(x)}(y)\,\mathrm d\mu(x) \right)\,\mathrm d\mu(y)\\
  &=\|u(x)\|_{L^1(\mu)}.
\end{align*}
Here and in the sequel, for a set $A$, $\mathbf{I}_A$ denotes its indicator function. 
  Since $\widetilde Q_{n,r}$ is a
convex combination of the operators $\widetilde A_{n,S'}$, we obtain
\begin{equation}\label{eq:L1-contraction}
  \|\widetilde Q_{n,r}u\|_{L^1(\mu)} \le \|u\|_{L^1(\mu)}.
\end{equation}
For the $L^\infty$ estimate, we have for every $x\in \supp(\mu)$
$$
  |\widetilde A_{n,S'}u(x)|
  \le 2^n\int_{I_{n,S'}(x)}|u(y)|\,\mathrm d\mu(y)
  \le \|u\|_{L^\infty(\mu)},
$$
 and the convex
combination yields
\begin{equation}\label{eq:Linfty-contraction}
  \|\widetilde Q_{n,r}u(x)\|_{L^\infty(\mu)} \le \|u\|_{L^\infty(\mu)}.
\end{equation}

The following theorem gives the required maximal estimate, with linear
growth in the complexity.

\begin{thm}\label{thm:shifted-maximal}
For every
$r\ge 1$, $\alpha>0$, and $f\in L^1(\mu)$,
$$
  \mu\bigl(\{x:M_r f(x)>\alpha\}\bigr)
  \le \frac{9r}{\alpha}\|f\|_{L^1(\mu)},
$$
where
$$
  M_r f(x)=\sup_{n\ge r}Q_{n,r}f(x).
$$
\end{thm}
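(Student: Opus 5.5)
The plan is to exploit the fact that $Q_{n,r}$ only rearranges the last $r$ digits of the level-$n$ cylinder. For $S\subseteq\{n-r,\dots,n-1\}$ the cylinder $I_{n,S'}(x)$ agrees with $x$ in its first $n-r$ digits, so $I_{n,S'}(x)\subseteq I_{n-r}(x)$. Since $T\mapsto\sigma_{n,T}$ acts bijectively on the $2^r$ level-$n$ subcylinders of a fixed level-$(n-r)$ cylinder $J$, we get two facts. First, writing $Q_{n,r}f(x)=\sum_{c}w(x,c)\,2^n\int_c|f|\,\mathrm d\mu$ (sum over level-$n$ cylinders $c\subseteq J$), we have $\sum_c w(x,c)=1$ for every $x\in J$, and, by the symmetry of the flip maps, $\sum_{x\text{-cylinders }\subseteq J}w(x,c)=1$ for every $c$. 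Second, as a consequence,
\[
E\bigl[Q_{n,r}f\mid\mathcal F_{n-r}\bigr]=E_{n-r}(|f|),\qquad \int_J Q_{n,r}f\,\mathrm d\mu=\int_J|f|\,\mathrm d\mu .
\]
This is the same computation as the $L^1$ contraction \eqref{eq:L1-contraction}, but localised to $J$. I would record this as a lemma first.

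Next I split the supremum by residue classes: $n=kr+s$ with $s\in\{0,\dots,r-1\}$. Then $M_rf\le\max_s M_r^{(s)}f$, where $M_r^{(s)}f=\sup_k Q_{kr+s,r}f$, so
\[
\mu(M_rf>\alpha)\le\sum_{s=0}^{r-1}\mu\bigl(M^{(s)}_rf>\alpha\bigr).
\]
This is where the linear factor $r$ comes from. It therefore suffices to prove $\mu(M^{(s)}_rf>\alpha)\le 9\alpha^{-1}\|f\|_1$ for each fixed $s$. Within one residue class, the windows $\{kr+s-r+1,\dots,kr+s\}$ are disjoint consecutive blocks. Setting $g_k=Q_{kr+s,r}f$ and $\mathcal G_k=\mathcal F_{kr+s}$, we get that $g_k$ is $\mathcal G_k$-measurable and $E[g_k\mid\mathcal G_{k-1}]=E_{(k-1)r+s}|f|$. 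So $(g_k)$ is "one step ahead" of a martingale.

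For fixed $s$ I would run a Calder\'on--Zygmund/stopping argument. Let $G=\{\sup_k E_{kr+s}|f|>\alpha/2\}$; by Doob's maximal inequality for the nonnegative martingale $E_m|f|$, $\mu(G)\le 2\alpha^{-1}\|f\|_1$. Outside $G$, consider the events $E_k=\{g_k>\alpha,\ g_j\le\alpha\ (j<k)\}\setminus G$. Cover $E_k$ by the level-$((k-1)r+s)$ cylinders $J$ meeting it, and on each such $J$ use the localised identity: $\mu(\{g_k>\alpha\}\cap J)\le\alpha^{-1}\int_J|f|$. The goal is to select maximal such $J$'s, i.e.\ the first $k$ at which a $J$ carries points of $\{g_k>\alpha\}$ not yet counted, so that the selected cylinders are disjoint. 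Summing over them then gives a bound of the form $C\alpha^{-1}\|f\|_1$, with the constants from Doob and the local step adding up to at most $9$.

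The main obstacle is exactly this disjointness step. The event $\{\tau=k\}$ is only $\mathcal G_k$-measurable, not $\mathcal G_{k-1}$-measurable, so the standard Doob stopping argument does not apply directly. The covering cylinders $J$ for different $k$ may also be nested. My first attempt is the following. If a selected $J$ at step $k$ lies inside a previously selected $J'$, its contribution is charged to $J'$ using $\int_J|f|\le\int_{J'}|f|$ together with the bound $2^{|J|^{-1}}$-type control from $E_{\cdot}|f|\le\alpha/2$ off $G$. This control limits the mass of $f$ on any cylinder not contained in $G$ to at most $\frac{\alpha}{2}\mu(J)$, and the resulting geometric series over nested generations should close with a bounded constant. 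If this charging fails, the fallback is to prove the weak bound for the disjoint-block structure directly through a product-space (random flip) representation, where each block is flipped independently, which reduces the problem to Doob's inequality on an enlarged probability space.
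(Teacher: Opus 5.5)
Your reduction to one residue class $n\equiv s \pmod r$ is correct, and it is where the factor $r$ comes from. Your localised identity $E[Q_{n,r}f\mid\mathcal F_{n-r}]=E_{n-r}(|f|)$ is also correct. But you never prove the per-class estimate $\mu(\sup_k g_k>\alpha)\le C\alpha^{-1}\|f\|_{L^1(\mu)}$, which is the entire content of the theorem, and the charging scheme you sketch for it does not close.

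Using $\mu(\{g_k>\alpha\}\cap J)\le\alpha^{-1}\int_J|f|\,\mathrm d\mu$ charges the \emph{whole} mass of $|f|$ on each covering cylinder. That is up to $\alpha^{-1}\|f\|_{L^1(\mu)}$ at every generation $k$, and there are unboundedly many nested generations. The extra information off $G$ is $\int_J|f|\,\mathrm d\mu\le\frac{\alpha}{2}\mu(J)$. It only yields $\mu(\{g_k>\alpha\}\cap J)\le\frac12\mu(J)$, which is not summable over generations, and nothing forces the geometric decay you hope for. Selecting maximal $J$'s does not help either: $\{g_k>\alpha\}\cap J$ may be a tiny part of $J$, and you have no lower bound for $\int_J|f|$ in terms of $\mu(J)$. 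The random-flip fallback is not an argument as stated. At best it dominates $\sup_k g_k$ by an average, over auxiliary variables, of a maximal function, and weak-type $(1,1)$ bounds do not survive such averaging.

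The missing idea is to split off the part of $|f|$ that is already bounded, as in a Calder\'on--Zygmund decomposition. The paper sets $h=|f|$, stops at $E_n(h)>\alpha/3$, and writes $h=g+\sum_{I\in\mathcal B}b_I$ with good part $g\le 2\alpha/3$ a.e., handled by the $L^\infty$ contraction. It then observes that off $B$ the term $\widetilde Q_{n,r}b_I$ can be nonzero only for $\ell(I)\le n\le \ell(I)+r-1$. The mean-zero property of $b_I$ kills $n\le \ell(I)$, and for $n\ge\ell(I)+r$ the flips stay inside $I$. The $L^1$ contraction and Markov then give the term $6r\alpha^{-1}\|h\|_{L^1(\mu)}$, with no residue splitting needed.

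The same idea repairs your own framework:
\begin{itemize}
\item For $x\notin G$, $g_k(x)$ is a convex combination of averages of $|f|$ over the level-$(kr+s)$ subcylinders $c$ of the level-$((k-1)r+s)$ cylinder $J(x)$.
\item Those $c$ with average $\le\alpha/2$ contribute at most $\alpha/2$ in total.
\item Any $c$ with average $>\alpha/2$ lies in $G$, while $J(x)\not\subseteq G$, so $c$ is itself a maximal stopping cylinder of $G$.
\item Thus $g_k\le\frac{\alpha}{2}+\widetilde Q_{kr+s,r}(|f|\mathbf 1_{B_k})$ off $G$, where $B_k$ is the union of the stopping cylinders of level $kr+s$.
\item The $B_k$ are pairwise disjoint, so the $L^1$ contraction and Markov give $\mu(\{x\notin G:\sup_k g_k(x)>\alpha\})\le 2\alpha^{-1}\|f\|_{L^1(\mu)}$.
\end{itemize}
Together with $\mu(G)\le 2\alpha^{-1}\|f\|_{L^1(\mu)}$ from Doob, this gives the per-class bound $4\alpha^{-1}\|f\|_{L^1(\mu)}$.
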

\begin{proof}
The proof proceeds in two cases.

Case 1: $0<\alpha\le 3\|f\|_{L^1(\mu)}$. Then the trivial bound
\[
\mu(\{x:M_r f(x)>\alpha\})\le 1\le \frac{3}{\alpha}\|f\|_{L^1(\mu)}\le \frac{9r}{\alpha}\|f\|_{L^1(\mu)}
\]
holds since $\mu$ is a probability measure.

Case 2. $\alpha>3\|f\|_{L^1(\mu)}$. Put $h=|f|$.  We use the linear operators
$\widetilde Q_{n,r}$ defined above, so that
$Q_{n,r}f=\widetilde Q_{n,r}h$. For $x\in\supp(\mu)$ define
$$
  \tau(x)=\min\bigg\{n\ge 1:E_n(h)(x)>\frac{\alpha}{3}\bigg\},
$$
with $\tau(x)=\infty$ if no such $n$ exists. Let $\mathcal B$
be the family of maximal dyadic cylinders among
$\{I_{\tau(x)}(x):\tau(x)<\infty\}$ where $I_{\tau(x)}(x)$ is given in Section 2.1.

If $\mathcal B=\emptyset$, then by the definition of $\mathcal B$ we have $\tau(x)=\infty$ for $\mu$-a.e.~$x$. Recall that
\[
E_n(h)(x)=2^n\int_{I_n(x)} h(y)\,\mathrm{d}\mu(y)=E[h\mid \mathcal F_n](x),
\]
and $\{E_n(h),\mathcal F_n\}_{n\ge 1}$ is a martingale since $E[E_{n+1}(h)\mid \mathcal F_n](x)=E_n(h)(x)$ (see Section 2.1). Therefore, by Doob's martingale convergence theorem,
\begin{align}\label{eq4.4}
h(x)=\lim_{n\to\infty}E_n(h)(x)\le \frac{\alpha}{3}
\qquad \text{for } \mu\text{-a.e. }x.
\end{align}
According to \eqref{eq:Linfty-contraction},
\[
M_r f(x)=\sup_{n\ge r}|\widetilde Q_{n,r}h(x)|\le \|h\|_{L^\infty}\le \frac{\alpha}{3}
\qquad \text{for } \mu\text{-a.e. }x.
\]
Hence the set $\{x:M_r f(x)>\alpha\}$ has $\mu$-measure zero, and therefore
\[
\mu\bigl(\{x:M_r f(x)>\alpha\}\bigr)=0\le \frac{9r}{\alpha}\|f\|_{L^1(\mu)}.
\]

If $\mathcal B\neq\emptyset$, then by construction the cylinders in $\mathcal B$ are pairwise disjoint, and we set $B=\bigcup_{I\in\mathcal B}I$. For each $I\in\mathcal B$, by the
defining of $\mathcal B$, one has
$\frac{3}{\alpha}\int_I h(y)\,\mathrm d\mu(y)>\mu(I)$ and then
\begin{align}\label{eq4.5}
  \mu(B)=\sum_{I\in\mathcal B}\mu(I)\le\frac{3}{\alpha}\sum_{I\in\mathcal B}\int_I h(y)\,\mathrm d\mu(y)   \le \frac{3\|h\|_{L^1(\mu)}}{\alpha}.
\end{align}
Fix $I\in\mathcal B$, for any $x,y \in I$, we have $\tau(x)=\tau(y)$ from the the definition. So we can denote $\ell(I)=\tau(x)$ for some $x\in I$, and let $h_I=\frac1{\mu(I)}\int_I h(y)\,\mathrm d\mu(y)$.
When $\ell(I)=1$, one has
\begin{align*}
h_I=2\int_{I}h(y)\mathrm{d}\mu(y)\leq 2\|h\|_{L^1(\mu)}\leq\frac{2\alpha}{3}.
\end{align*}
When $\ell(I)>1$, from the definition of $\ell(I)$, we also have
\begin{align*}
h_I=E_{\ell(I)}(h)(x)\leq2E_{\ell(I)-1}(h)(x)\leq \frac{2\alpha}{3}\qquad \text{for some  $x\in I $.}
\end{align*}
Hence
\begin{align}\label{eq4.6}
h_I\leq \frac{2\alpha}{3} \qquad \text{for  $I\in\mathcal B$}.
\end{align}

Decompose
$$
  h(x) = g(x) + \sum_{I\in\mathcal B} b_I(x),
$$
where
$g(x) = h(x)\mathbf 1_{B^c}(x) + \sum_{I\in\mathcal B} h_I\mathbf 1_I(x)$ and $
  b_I(x) = \mathbf(h(x)-h_I\mathbf)\mathbf 1_I(x)$. Similar to \eqref{eq4.4},
\begin{align}\label{eq:addfeafea}
g(x)=h(x)=\lim_{n\rightarrow\infty}E_n (h)(x)\leq\frac{\alpha}{3} \qquad \text{for $\mu$ -a.e. $x\in B^c $},
\end{align}
Combing \eqref{eq:addfeafea} with \eqref{eq4.6}, one has
\begin{align*}
g(x)\leq\frac{2\alpha}{3}\qquad \text{for $\mu$ -a.e. $x\in \R $.}
\end{align*}
By~\eqref{eq:Linfty-contraction}, we obtain
\begin{align}\label{eq4.7}
\sup_{n\ge r}|\widetilde Q_{n,r}g(x)|
  \le \|g(x)\|_{L^\infty}
  \le \frac{2\alpha}{3} \qquad \text{for $\mu$ -a.e. $x\in \R $}.
\end{align}

Next, we estimate the value of $\widetilde Q_{n,r}b_I(x)$. Let $I\in\mathcal B$. If $n\leq\ell(I)$,  then for all $x\in \supp(\mu)$, $\widetilde A_{n,S'}b_I(x)=0$ because $I$ is contained in
a single level-$n$ cylinder and $\int_I b_I(y)\,\mathrm d\mu(y)=0$, hence $\widetilde Q_{n,r}b_I(x)=0$ for $x\in \supp(\mu)$;  If $n\ge \ell(I)+r$, by the definition of $\widetilde Q_{n,r}$, we know $S'\subseteq \{n-r+1,\ldots,n\}\subseteq\{\ell(I)+1,\ell(I)+2,\ldots\}$ which does not affect the first $\ell(I)$ digits. Since $A_{n,S'}b_I(x)$ is supported in $I$, it follows that $\widetilde Q_{n,r}b_I(x)$ is also supported in $I$, and therefore vanishes on $B^c$. So
\begin{align*}
\sum_{n\leq\ell(I), n\ge \ell(I)+r}\widetilde Q_{n,r}b_I(x)=0 \qquad \text{for $x\in B^c$}.
\end{align*}
Then, for $x\in B^c$,  we have
\begin{align*}
  \sup_{n\ge r}
  \Bigl|\widetilde Q_{n,r}\sum_{I\in\mathcal B}b_I(x)\Bigr|
  &\le  \sum_{I\in\mathcal B} \sup_{n\ge r} \Bigl|\widetilde Q_{n,r}b_I(x)\Bigr|  \\
  &\le     \sum_{I\in\mathcal B}
  \sum_{\substack{n\ge r\\
                  }}
  |\widetilde Q_{n,r}b_I(x)| \\
  &=
  \sum_{I\in\mathcal B}
  \sum_{\substack{n\ge r\\
                  \ell(I)\le n\le\ell(I)+r-1}}
  |\widetilde Q_{n,r}b_I(x)|,
\end{align*}
Together with \eqref{eq:L1-contraction}, we obtain
\begin{align}\label{eq4.8}
  \int_{B^c}
  \sup_{n\ge r}
  \Bigl|\widetilde Q_{n,r}\sum_{I\in\mathcal B}b_I(y)\Bigr|\,\mathrm d\mu(y)
  &\le
  \sum_{I\in\mathcal B}
  \sum_{\substack{n\ge r\nonumber \\
                  \ell(I)\le n\le\ell(I)+r-1}}
  \|\widetilde Q_{n,r}b_I(y)\|_{L^1(\mu)} \nonumber \\
  &\le
  r\sum_{I\in\mathcal B}\|b_I\|_{L^1(\mu)}
  \le 2r\|h\|_{L^1(\mu)}.
\end{align}
On the other hand, according to \eqref{eq4.7}, for $\mu$-a.e. $x\in B^c$,
$$
   Q_{n,r}h(x)
  \le |\widetilde Q_{n,r}g(x)|
     + \Biggl|\widetilde Q_{n,r}\sum_{I\in\mathcal B}b_I(x)\Biggr|
  \le \frac{2\alpha}{3}
     + \Biggl|\widetilde Q_{n,r}\sum_{I\in\mathcal B}b_I(x)\Biggr|.
$$
Then
$$
  \{x:M_r f(x)>\alpha\}
  \subseteq B
    \cup \Bigl\{x\in B^c:
      \sup_{n\ge r}\Bigl|\widetilde Q_{n,r}\sum b_I(x)\Bigr|>\frac{\alpha}{3}\Bigr\}.
$$
Combining with \eqref{eq4.5}, \eqref{eq4.8} and Markov's inequality,
\begin{align*}
  \mu\bigl(\{x:M_r f(x)>\alpha\}\bigr)
  &\le \mu(B)
     + \frac{3}{\alpha}
       \int_{B^c}
       \sup_{n\ge r}
       \Bigl|\widetilde Q_{n,r}\sum b_I\Bigr|\,\mathrm d\mu \\
  &\le \frac{3\|h\|_{L^1(\mu)}}{\alpha} + \frac{6r\|h\|_{L^1(\mu)}}{\alpha}\\
    &\le \frac{9r}{\alpha}\|f\|_{L^1(\mu)}.
\end{align*}

Hence, we finish the proof.
\end{proof}

\section{Proof of the main theorem}
By Theorem \ref{thm:shifted-maximal}, each operator \(M_r\) satisfies a weak-type \((1,1)\) bound
\[
  \|M_r f\|_{L^{1,\infty}}
  \le C(r+1)\|f\|_{L^1(\mu)}
\]
with constant \(C>0\) independent of $r$. Here,
\begin{equation*}
  \|M_rf\|_{L^{1,\infty}}
  :=\sup_{\alpha>0}\alpha\,
    \mu\bigl(\{x:|M_rf(x)|>\alpha\}\bigr).
\end{equation*}
Proposition~\ref{prop:domination} gives the following pointwise inequality:
\begin{equation}\label{eq:eqaddmaxi}
  \sup_{n\ge1}|S_n f(x)|
  \le Mf(x)
    + \frac{b}{1+b}\sum_{r\ge1} p^r M_r f(x),
\end{equation}
where $Mf(x)=\sup_{n\ge1} E_n(|f|)(x)$.

 For the first term on the right-hand side of \eqref{eq:eqaddmaxi},  as shown in
Section~2.1, $\{E_n(f),\mathcal F_n\}_{n\ge1}$ is a martingale on $(\mathcal F_\infty,\mu)$.  Applying Doob's maximal inequality gives for any $\alpha>0$,
\begin{equation}\label{eq:aeawewfawwefa}
\mu\bigl(\{x: M f(x)>\alpha\}\bigr)
\le \alpha^{-1}\|f\|_{L^1(\mu)},
\end{equation}
i.e., $M$ is of weak type $(1,1)$. Thus it remains to prove that the weighted series on the right-hand side of \eqref{eq:eqaddmaxi} is also of weak type $(1,1)$.

Since \(\|\cdot\|_{L^{1,\infty}}\) is only a quasi-norm, it satisfies only the quasi-triangle inequality
\[
\|u+v\|_{L^{1,\infty}}
\le 2\bigl(\|u\|_{L^{1,\infty}}+\|v\|_{L^{1,\infty}}\bigr).
\]
This makes the direct summation of weak-type bounds fail.
Indeed, applying the quasi-triangle inequality to the first \(N\) terms of the weighted series gives
\begin{align}\label{eq:adddqwefwqew}
\Bigl\|\sum_{r=1}^N p^r M_r f\Bigr\|_{L^{1,\infty}}
&\le 2^{N-1}\sum_{r=1}^N p^r \|M_r f\|_{L^{1,\infty}}\\
&\le C\,2^{N-1}\|f\|_{L^1(\mu)}\sum_{r=1}^N p^r (r+1).
\end{align}
Since \(0<p<1\), the series \(\sum_{r\ge1} p^r (r+1)\) converges to a positive finite number \(L\). Thus the right-hand side of \eqref{eq:adddqwefwqew} is bounded by \(C L\,2^{N-1}\|f\|_{L^1(\mu)}\), which tends to infinity as \(N\to\infty\). Therefore we avoid the quasi-norm and instead estimate the level sets of the weighted sum directly. The following lemma supplies the required summation bound.

\begin{lem}\label{lem:sum-weak}
  Let \((X,\nu)\) be a finite measure space and let
  \(T_r\), \(r\ge 1\), be operators on \(L^{1}(\nu)\) taking non-negative
  values. Assume that each \(T_r\) satisfies a weak-type \((1,1)\)
  estimate
  \begin{equation}\label{eq:weak-hyp}
    \nu\bigl(\{x\in X: T_r f(x)>\alpha\}\bigr)
    \le C_r\,\alpha^{-1}\|f\|_{L^{1}(\nu)}
  \end{equation}
  for every \(\alpha>0\) and \(f\in L^1(\nu)\). If
  \(\{a_r\}_{r\ge 1}\) is a sequence of positive numbers such that
  \(\sum_{r\ge 1}\sqrt{a_r C_r}<\infty\), then the weighted sum
  \(\sum_{r\ge 1}a_r T_r\) also satisfies a weak-type \((1,1)\)
  estimate:
  \begin{equation}\label{eq:weak-sum}
    \nu\Bigl(\Bigl\{x\in X:
      \sum_{r\ge 1} a_r T_r f(x) > \alpha\Bigr\}\Bigr)
    \le \frac{1}{\alpha}\Bigl(\sum_{r\ge 1}\sqrt{a_r C_r}\Bigr)^{\!2}
       \|f\|_{L^{1}(\nu)}
  \end{equation}
  for every \(\alpha>0\) and \(f\in L^{1}(\nu)\).
\end{lem}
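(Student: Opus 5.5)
Fix \(\alpha>0\) and \(f\in L^1(\nu)\), and put \(K=\sum_{r\ge1}\sqrt{a_rC_r}<\infty\). We may assume \(K>0\); if some \(C_r=0\), then \(T_rf=0\) a.e. and that term can be dropped. The plan is to split the threshold \(\alpha\) among the terms in proportion to \(\sqrt{a_rC_r}\) rather than geometrically. Define
\[
\alpha_r=\frac{\alpha\sqrt{a_rC_r}}{K},\qquad r\ge1,
\]
so that \(\sum_r\alpha_r=\alpha\).

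Since all \(T_rf\) are non-negative, if \(\sum_r a_rT_rf(x)>\alpha=\sum_r\alpha_r\), then \(a_rT_rf(x)>\alpha_r\) for at least one \(r\). Otherwise every term is at most \(\alpha_r\) and the sum is at most \(\alpha\). This is where non-negativity matters: it lets us compare the series termwise, including when the series diverges at \(x\). Hence
\[
\Bigl\{x:\sum_{r\ge1}a_rT_rf(x)>\alpha\Bigr\}\subseteq\bigcup_{r\ge1}\Bigl\{x:T_rf(x)>\frac{\alpha_r}{a_r}\Bigr\}.
\]

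Next apply countable subadditivity of \(\nu\) together with the hypothesis \eqref{eq:weak-hyp} at the level \(\alpha_r/a_r\). This gives
\[
\nu\Bigl(\Bigl\{\sum_r a_rT_rf>\alpha\Bigr\}\Bigr)\le\sum_{r\ge1}\frac{C_ra_r}{\alpha_r}\|f\|_{L^1(\nu)}=\sum_{r\ge1}\frac{a_rC_rK}{\alpha\sqrt{a_rC_r}}\|f\|_{L^1(\nu)}=\frac{K^2}{\alpha}\|f\|_{L^1(\nu)},
\]
which is exactly \eqref{eq:weak-sum}.

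The only genuine choice in the argument is the splitting \(\alpha_r\), and minimising \(\sum_r a_rC_r/\alpha_r\) subject to \(\sum_r\alpha_r=\alpha\) (via Cauchy--Schwarz or Lagrange multipliers) produces exactly the \(\sqrt{a_rC_r}\) weights. I therefore expect no real obstacle beyond this. Measurability of the level sets is taken for granted, as in the statement. Finiteness of \(\nu\) is not actually needed.

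In the application, \(a_r=p^r\) and \(C_r=9r\), so \(\sum_r\sqrt{9rp^r}<\infty\) because \(p<1\).
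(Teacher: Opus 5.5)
Your proof is correct and is essentially the paper's argument. In both, non-negativity gives the covering by $\bigcup_r\{T_rf>\alpha c_r/a_r\}$, and countable subadditivity plus the weak-type hypotheses finish it. The only difference is in choosing the weights. The paper keeps arbitrary positive $c_r$ with $\sum_r c_r=1$ and identifies the infimum of $\sum_r a_rC_r/c_r$ as $\bigl(\sum_r\sqrt{a_rC_r}\bigr)^2$ via Cauchy--Schwarz. You instead plug in the optimal $c_r=\sqrt{a_rC_r}/K$ directly, and your separate treatment of $C_r=0$ covers the case where that choice would not be strictly positive.
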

\begin{proof}
  Let $\{c_r\}_{r\ge 1}$ be any sequence of strictly positive numbers
  with \[\sum_{r\ge 1}c_r=1.\]  Suppose that
  $\sum_{r\ge 1} a_r T_r f(x)>\alpha$ at some point $x\in X$.
  If we had $T_r f(x)\le\alpha c_r/a_r$ for every $r\ge 1$, then multiplying by $a_r$ and summing over $r$ would give
  \[
    \sum_{r\ge 1}a_r T_r f(x)
    \le\sum_{r\ge 1}\alpha c_r=\alpha,
  \]
  which contradicts the assumption. Hence there exists a $r_0$ for
  which $T_{r_0} f(x)>\alpha c_{r_0}/a_{r_0}$, and we obtain
  $$
    \Bigl\{x\in X: \sum_{r\ge 1} a_r T_r f(x) > \alpha\Bigr\}
    \subseteq
    \bigcup_{r\ge 1}
    \Bigl\{x\in X: T_r f(x) > \frac{\alpha c_r}{a_r}\Bigr\}.
  $$
  Applying the subadditivity of $\nu$, and the
  weak-type hypothesis~\eqref{eq:weak-hyp},
  we obtain
  \begin{align}\label{eq:subsub}
    \nu\Bigl(\Bigl\{x: \sum_{r\ge 1} a_r T_r f(x) > \alpha\Bigr\}\Bigr)
    &\le \sum_{r\ge 1}
       \nu\Bigl(\Bigl\{x: T_r f(x) > \frac{\alpha c_r}{a_r}\Bigr\}\Bigr) \notag \\[4pt]
    &\le \sum_{r\ge 1}
       C_r\cdot\frac{a_r}{\alpha c_r}\,\|f\|_{L^{1}(\nu)} \notag \\[4pt]
    &= \frac{\|f\|_{L^{1}(\nu)}}{\alpha}
       \sum_{r\ge 1} \frac{a_r C_r}{c_r}.
  \end{align}
  Since the left-hand side of \eqref{eq:subsub} is independent of $\{c_r\}$, it follows that \begin{equation}\label{eq:min-form}
    \nu\Bigl(\Bigl\{x: \sum_{r\ge 1} a_r T_r f(x) > \alpha\Bigr\}\Bigr)
    \le \frac{\|f\|_{L^{1}(\nu)}}{\alpha}\,
        \inf_{\substack{c_r>0\\ \sum_rc_r=1}}
        \sum_{r\ge 1}\frac{a_r C_r}{c_r}.
  \end{equation}

By the Cauchy-Schwarz
  inequality,
  \[
    \Bigl(\sum_{r\ge 1}\sqrt{a_r C_r}\Bigr)^{\!2}\le \Bigl(\sum_{r\ge 1}\frac{a_r C_r}{c_r}\Bigr)
        \Bigl(\sum_{r\ge 1}c_r\Bigr)= \sum_{r\ge 1}\frac{a_r C_r}{c_r},
 \]
  and equality is attained when
  $c_r = \sqrt{a_r C_r}\,/\!\sum_{s\ge 1}\sqrt{a_s C_s}$.
  Consequently,
  $$
    \inf_{\substack{c_r>0\\ \sum_r c_r=1}}
      \sum_{r\ge 1}\frac{a_r C_r}{c_r}
    = \Bigl(\sum_{r\ge 1}\sqrt{a_r C_r}\Bigr)^{\!2},
  $$
  which, inserted into \eqref{eq:min-form}, yields~\eqref{eq:weak-sum}.
\end{proof}


\begin{proof}[Proof of Theorem~\ref{thm:main}]
By Proposition~\ref{prop:domination}, for every \(f\in L^1(\mu)\),
\[
S^\ast f(x)
\le M f(x) + \frac{b}{1+b}\sum_{r\ge1} p^r M_r f(x).
\]
The maximal operator \(M\) is of weak type \((1,1)\) by \eqref{eq:aeawewfawwefa}. Moreover, Theorem~\ref{thm:shifted-maximal} gives
\[
\mu\bigl(\{x:M_r f(x)>\alpha\}\bigr)
\le C(r+1)\alpha^{-1}\|f\|_{L^1(\mu)}
\]
for all \(r\ge1\). Set \(a_r=\frac{b}{1+b}p^r\). Since \(p<1\),
\[
\sum_{r\ge1}\sqrt{a_r C(r+1)}
=\sqrt{\frac{bC}{1+b}}\sum_{r\ge1}\sqrt{r+1}\,p^{r/2}<\infty.
\]
Thus Lemma~\ref{lem:sum-weak} applied to \(T_r=M_r\) on \((\supp\mu,\mu)\) shows that the weighted sum \(\sum_{r\ge1} a_r M_r f\) is of weak type \((1,1)\). Consequently, \(S^\ast\) is of weak type \((1,1)\).

It remains to prove almost-everywhere convergence from the weak-type
$(1,1)$ estimate. We follow the standard procedure from \cite{St70}. Define for $h\in L^1(\mu)$
$$
\Omega h(x) = \limsup_{n\to\infty} |S_n h(x) - h(x)|,
\qquad x\in\supp\mu.
$$
For any $h_1,h_2\in L^1(\mu)$, the triangle inequality gives
$$
\Omega(h_1+h_2)(x) \le \Omega h_1(x) + \Omega h_2(x)\quad\text{and}\quad
\Omega h_1(x) \le S^\ast h_1(x) + |h_2(x)|.
$$
If $f\in C(\supp\mu)$, then Strichartz~\cite{Str00} proved that
$S_n f$ converges uniformly to $f$ on $\supp\mu$; hence
$\Omega f(x)=0$ for every $x\in\supp\mu$.

Fix $\varepsilon>0$.
Since $C(\supp\mu)$ is dense in $L^1(\mu)$, there exists
$h\in C(\supp\mu)$ such that $\|f-h\|_{L^1(\mu)} < \varepsilon^2$.
Then
\begin{align*}
\Omega f(x)& \le \Omega(f-h)(x) + \Omega h(x)
           = \Omega(f-h)(x)\\
           &\le S^\ast(f-h)(x) + |f(x)-h(x)|.
\end{align*}
For any $\alpha>0$, the weak-type estimate for $S^\ast$ and Markov's
inequality give
\begin{align*}
\mu\bigl(\{x: \Omega f(x) > \alpha\}\bigr)
&\le \mu\bigl(\{x:
      S^\ast(f-h)(x) > \alpha/2\}\bigr) \\
&\qquad + \mu\bigl(\{x:
      |f(x)-h(x)| > \alpha/2\}\bigr)\\[2pt]
&\le \frac{\widetilde C\,\varepsilon^2}{\alpha/2}
   + \frac{\varepsilon^2}{\alpha/2}
 = \frac{2(\widetilde C+1)}{\alpha}\,\varepsilon^2 .
\end{align*}
Since $\varepsilon>0$ is arbitrary, $\mu\bigl(\{x: \Omega f(x) > \alpha\}\bigr)=0$ for
every $\alpha>0$. Hence $\Omega f = 0$ $\mu$-almost everywhere on
$\supp\mu$, which is precisely
$\lim_{n\to\infty}S_n f(x) = f(x)$ for $\mu$-almost every
$x\in\supp\mu$.
\end{proof}

\section*{Acknowledgements}
The first author was supported by Hunan Provincial Natural Science Foundation of China (2026JJ90109) and the Scientific Research Fund of Hunan Provincial Education Department (25A0086, 24B0107). The second author was supported by  the National Natural Science Foundations of China (12271534, 12301105) and the University Research Project of Guangzhou Education Bureau (2024312332).

\end{document}